\newtheorem{theorem}{Theorem}[section]
\mathchardef\mh="2D
\newcounter{tbox}
\newcommand{\sta}[1]{\refstepcounter{tbox}\noindent{ \parbox{\textwidth}{\vspace*{0.3cm}(\thetbox) \emph{#1}\vspace*{0.3cm}}}}
\newcommand{\otherlabel}[2]{\protected@edef\@currentlabel{#2}\label{#1}}
    \title[List-$k$-coloring $H$-free graphs for all $k>4$]{List-$k$-coloring $H$-free graphs for all $k>4$}
\author{Maria Chudnovsky$^{\ast \amalg}$}
\author{Sepehr Hajebi $^{\mathsection}$}
\author{Sophie Spirkl$^{\mathsection \parallel}$}
\address{$^{\ast}$Princeton University, Princeton, NJ, USA}
\address{$^{\mathsection}$Department of Combinatorics and Optimization, University of Waterloo, Waterloo, Ontario, Canada}
\address{$^{\amalg}$ Supported by NSF-EPSRC Grant DMS-2120644 and by AFOSR grant FA9550-22-1-0083.} 
\address{$^{\parallel}$ We acknowledge the support of the Natural Sciences and Engineering Research Council of Canada (NSERC), [funding reference number RGPIN-2020-03912].
Cette recherche a \'et\'e financ\'ee par le Conseil de recherches en sciences naturelles et en g\'enie du Canada (CRSNG), [num\'ero de r\'ef\'erence RGPIN-2020-03912]. This project was funded in part by the Government of Ontario. This research was conducted while Spirkl was an Alfred P. Sloan Fellow.}
\date{\today}
\begin{document}
\maketitle
\begin{abstract}
Given an integer $k>4$ and a graph $H$, we prove that, assuming \textsf{P}$\neq$\textsf{NP}, the \textsc{List-$k$-Coloring Problem} restricted to $H$-free graphs can be solved in polynomial time if and only if either every component of $H$ is a path on at most three vertices, or removing the isolated vertices of $H$ leaves an induced subgraph of the five-vertex path. In fact, the ``if'' implication holds for all $k\geq 1$.
\end{abstract} 
\section{Introduction}

Graphs in this paper have finite vertex sets, no loops an no parallel edges. Let $G=(V(G),E(G))$ be a graph.  An \textit{induced subgraph} of $G$ is the graph $G\setminus X$ for some $X\subseteq V(G)$, that is, the graph obtained from $G$ by removing the vertices in $X$. For $X\subseteq V(G)$, we use both $X$ and $G[X]$ to denote the subgraph of $G$ induced on $X$, which is the same as $G\setminus (V(G)\setminus X)$. We also say $G$ \textit{contains} a graph $H$ if $H$ is isomorphic to an induced subgraph of $G$; otherwise, we say $G$ is \textit{$H$-free}.

For an integer $k\geq 1$, we write $[k]=\{1,\ldots, k\}$. Given a graph $G$,  a \textit{proper $k$-coloring} of $G$ is a map $\varphi:V(G)\rightarrow [k] $ such that for every edge $uv\in E(G)$, we have $\varphi(u)\neq \varphi(v)$. A \textit{list-$k$-assignment} for $G$ is a map $L:V(G)\rightarrow 2^{[k]}$. Given a list-$k$-assignment $L$ for $G$, an \textit{$L$-coloring} of $G$ is a proper $k$-coloring $\varphi$ of $G$ such that $\varphi(v)\in L(v)$ for all $v\in V(G)$. The \textsc{$k$-Coloring Problem} is to decide, for a graph $G$, whether $G$ admits a $k$-coloring, and the \textsc{List-$k$-Coloring Problem} asks, for a graph $G$ and a list-$k$-assignment of $G$, whether $G$ admits an $L$-coloring.

The \textsc{List-$2$-Coloring Problem} can be solved in polynomial time via a reduction to \textsc{2SAT} \cite{2sat1}, whereas the \textsc{$3$-Coloring Problem} is famously known to be \textsf{NP}-hard \cite{karp}. In fact, the \textsc{$3$-Coloring Problem} remains \textsf{NP}-hard in the class of $H$-free graphs except possibly for some rather restricted choices of $H$: 

\begin{theorem}[Holyer \cite{ClawFree}, Kami\'{n}ski and Lozin\cite{CycleFree}]\label{thm:clawgirth}
      Let $H$ be a graph with at least one component which is not a path. Then the \textsc{$3$-Coloring Problem} restricted to $H$-free graphs is \textsf{NP}-hard.
\end{theorem}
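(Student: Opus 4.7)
The plan is to split into cases according to the structure of a fixed non-path component $C$ of $H$, and in each case exhibit a subclass of $H$-free graphs on which one of the two cited theorems already establishes NP-hardness of the \textsc{$3$-Coloring Problem}.

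Case 1: $C$ contains a cycle. A shortest cycle in $C$ is necessarily chordless, so $H$ has an induced copy of $C_g$ for some $g\geq 3$. Any graph $G$ of girth greater than $g$ then avoids $H$ as an induced subgraph, because an induced $H$ in $G$ would force an induced $C_g$ in $G$, contradicting the girth assumption. The Kami\'{n}ski--Lozin theorem \cite{CycleFree} asserts that the \textsc{$3$-Coloring Problem} is \textsf{NP}-hard on graphs of girth exceeding $g$, and this hardness transfers upward to the larger class of $H$-free graphs.

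Case 2: $C$ is a tree that is not a path. Then $C$ has a vertex $v$ of degree at least three; picking any three neighbors $u_1,u_2,u_3$ of $v$, the absence of cycles in a tree forces the $u_i$ to be pairwise non-adjacent, so $\{v,u_1,u_2,u_3\}$ induces a claw $K_{1,3}$ in $H$. Every claw-free graph is therefore $H$-free, and Holyer's theorem \cite{ClawFree} gives \textsf{NP}-hardness of \textsc{$3$-Coloring} on claw-free graphs; once again the hardness lifts to $H$-free graphs.

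These two cases are exhaustive, since a non-path component either contains a cycle or is a tree with a vertex of degree at least three. Thus the argument is essentially an assembly of the two cited black boxes with two elementary structural observations: a shortest cycle in a graph is induced, and a non-path tree contains an induced claw. I do not expect any real obstacle here; the entire mathematical weight of the statement is carried by \cite{ClawFree} and \cite{CycleFree}.
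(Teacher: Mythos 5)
Your argument is correct and is essentially the intended justification: the paper states this theorem purely by citation (Holyer for the claw case, Kami\'nski--Lozin for the girth case), and your two-case split---a non-path component yields either an induced cycle $C_g$, making all graphs of girth exceeding $g$ $H$-free, or an induced claw, making all claw-free graphs $H$-free---is the standard way the combined statement follows from those two results. The only implicit step, equally implicit in the paper's citation, is that Holyer's hardness result for $3$-edge-coloring cubic graphs is read as hardness of vertex $3$-coloring for their line graphs, which are claw-free.
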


The converse to Theorem~\ref{thm:clawgirth} is wide open. In general, for the \textsc{$k$-Coloring Problem}, no value of $k\geq 3$ is known for which the ``easy'' choices of $H$ are completely distinguished from the ``hard'' ones. The situation with the \textsc{List-$k$-Coloring Problem} was also the same until recently, when the last two authors together with Li \cite{L5CrP3} settled the case $k=5$. For integers $r,s\geq 1$, we denote by $rP_s$ the graph obtained from the disjoint union of $r$ copies of the $s$-vertex path, and we write $P_s$ instead of $1Ps$. For graphs $H_1,H_2$, we write $H_1+H_2$ to denote the disjoint union of $H_1$ and $H_2$ (see Figure~\ref{fig:easyoutcomes}):

\begin{theorem}[Hajebi, Li, Spirkl \cite{L5CrP3}]\label{thm:L5C}
     Suppose that \textsf{P}$\neq$\textsf{NP}. Let $H$ be a graph. Then the \textsc{List-$5$-Coloring Problem} restricted to $H$-free graphs can be solved in polynomial time if and only if for some integer $r\geq 1$, either $rP_3$ or $P_5+rP_1$ contains $H$.
\end{theorem}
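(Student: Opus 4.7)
The plan is to handle both directions of the biconditional separately.

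For the \emph{only if} direction, I proceed by contrapositive: suppose $H$ is not contained in any $rP_3$ or $P_5+rP_1$. If some component of $H$ is not a path, Theorem~\ref{thm:clawgirth} already yields \textsf{NP}-hardness, since restricting every list to $\{1,2,3\}$ reduces the \textsc{$3$-Coloring Problem} to the \textsc{List-$5$-Coloring Problem}. Otherwise every component of $H$ is a path, and a short case analysis shows the hypothesis is equivalent to $H$ containing either $P_6$ or $P_4+P_2$ as an induced subgraph. Because being $H$-free is a stronger condition than being $H'$-free when $H'$ is an induced subgraph of $H$, it suffices to prove \textsf{NP}-hardness of \textsc{List-$5$-Coloring} on $P_6$-free graphs and on $(P_4+P_2)$-free graphs. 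The first is classical, holding already for the $5$-\textsc{Coloring Problem}. For the second, I would build a reduction from a standard \textsf{NP}-hard problem (say, $3$-\textsc{SAT}) using clause and variable gadgets whose joint graph is verifiably $(P_4+P_2)$-free; enforcing this property through ``glue'' vertices that destroy any would-be disjoint $P_2$ is the delicate part.

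For the \emph{if} direction, monotonicity lets us reduce to solving \textsc{List-$5$-Coloring} in polynomial time on $rP_3$-free graphs and on $(P_5+rP_1)$-free graphs, for each fixed $r\geq 1$. For the $rP_3$-free case, greedily extract a maximal vertex-disjoint collection of induced $P_3$'s, which by hypothesis has size at most $r-1$; let $S$ be its vertex set. By maximality, $G\setminus S$ contains no induced $P_3$ and is therefore a disjoint union of cliques, on which \textsc{List-$5$-Coloring} is trivial. Enumerating the $5^{|S|}=5^{O(r)}$ colorings of $S$ and combining with the cliques finishes this case. For the $(P_5+rP_1)$-free case, bootstrap from the known polynomial-time algorithm for \textsc{List-$5$-Coloring} on $P_5$-free graphs: if $G$ is $P_5$-free apply it, otherwise find an induced $P_5$, call it $P$, and let $A$ be the set of vertices of $G$ non-adjacent to every vertex of $V(P)$. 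The $(P_5+rP_1)$-free hypothesis forces $G[A]$ to have no independent set of size $r$, and (assuming $G$ contains no $K_6$, which is necessary for $5$-colorability) Ramsey's theorem bounds $|A|$ by a constant depending on $r$. Branching over the $5^{5+|A|}$ colorings of $V(P)\cup A$ and recursing on the residual graph yields the algorithm.

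The main obstacle I foresee is ensuring the $(P_5+rP_1)$-free recursion terminates in polynomial time: the residual graph remains $(P_5+rP_1)$-free and shrinks only by a constant, so a naive analysis is exponential. A more careful scheme is needed---for instance, a potential function counting residual induced $P_5$'s alongside vertex count, or a structural lemma showing that the residual instance must become $P_5$-free after polynomially many branchings. The $(P_4+P_2)$-free hardness reduction is the other significant difficulty, since the forbidden pattern places qualitatively new constraints on gadget design compared to classical $P_t$-free reductions.
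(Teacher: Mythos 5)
Your hardness-side reduction (via Theorem~\ref{thm:clawgirth} and the case analysis showing that $H$ must contain $P_6$ or $P_4+P_2$) matches the structure of the actual argument, but the algorithmic side contains a fatal error in the $rP_3$-free case. Being $rP_3$-free means $G$ has no \emph{induced} copy of $rP_3$, i.e.\ no $r$ induced three-vertex paths that are pairwise \emph{anticomplete} to each other; it does not bound the number of pairwise vertex-disjoint induced $P_3$'s. For example, the complete join of $m$ copies of $P_3$ (or the complement of a perfect matching) has independence number $2$ and is therefore $2P_3$-free, since $2P_3$ contains an independent set of size $4$, yet it contains $m$ pairwise vertex-disjoint induced $P_3$'s. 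So your maximal packing $S$ can have linear size, the bound $|S|\leq 3(r-1)$ is false, and the $5^{|S|}$ enumeration is exponential. This is precisely why the $rP_3$-free case is the genuinely hard, novel content behind the statement: in \cite{L5CrP3} it is handled by first reducing, in polynomial time, to polynomially many instances in which no induced $P_3$ has a common color in all three lists (Theorem~\ref{thm:nogoodp3}) and then by an intricate $k=5$-specific analysis reducing to \textsc{2SAT} (an analysis which the present paper replaces, for all $k$, by the bipartite-matching argument of Theorem~\ref{thm:matchingmagic}). Your sketch offers no substitute for either step.

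The remaining ingredients are also not established by your proposal, though here the gaps are of a different nature: the polynomial-time algorithm for \textsc{List-$5$-Coloring} on $(P_5+rP_1)$-free graphs and the \textsf{NP}-hardness on $(P_4+P_2)$-free graphs are nontrivial published results (see Theorem~\ref{thm:knownpoly}), which both \cite{L5CrP3} and this paper simply invoke. You correctly observe (via Ramsey, after rejecting graphs containing $K_6$) that the set $A$ of vertices anticomplete to an induced $P_5$ has bounded size, but, as you yourself note, branching over colorings of $V(P)\cup A$ and recursing removes only a constant number of vertices per level, giving an exponential-time procedure; no potential-function fix is supplied. Likewise the $(P_4+P_2)$ hardness gadget is left entirely open. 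So while your overall decomposition of the theorem into these four subproblems is the right one, none of the three substantive subproblems is actually resolved, and the one for which you give a complete argument ($rP_3$-free) rests on a false structural claim.
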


\begin{figure}
    \centering
    \includegraphics{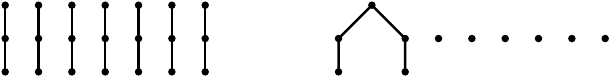}
    \caption{The graphs $7P_3$ (left) and $P_5+6P_1$ (right).}
    \label{fig:easyoutcomes}
\end{figure}

In this paper, we extend the conclusion of Theorem~\ref{thm:L5C} to all $k>4$. Indeed, like \cite{L5CrP3}, our main contribution is to show that for every $r\geq 1$, excluding (an induced subgraph graph of) $rP_3$ results in a polynomial-time solvable case, which also happens to be true for all $k\geq 1$:

\begin{theorem}\label{thm:main2}
    Let $k,r\geq 1$ be fixed integers. Then the  \textsc{List-$k$-Coloring Problem} restricted to $rP_3$-free graphs can be solved in polynomial time.
\end{theorem}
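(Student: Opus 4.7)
The plan is to reduce the instance to a list-$k$-coloring problem on a disjoint union of cliques, which can be solved in polynomial time via bipartite matching. The reduction is a branching argument that exploits the fact that an $rP_3$-free graph contains at most $r-1$ pairwise vertex-disjoint induced copies of $P_3$.

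First, I would greedily construct a maximal collection $\mathcal{P}=\{P^1,\ldots,P^t\}$ of pairwise vertex-disjoint induced copies of $P_3$ in $G$; this takes polynomial time. Since $G$ is $rP_3$-free, we have $t\leq r-1$, and so $S:=V(P^1)\cup\cdots\cup V(P^t)$ has size at most $3(r-1)$. By the maximality of $\mathcal{P}$, the graph $G\setminus S$ contains no induced $P_3$, which implies that $G\setminus S$ is a disjoint union of cliques.

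Next, I would enumerate every proper $L$-coloring $\varphi$ of $G[S]$, i.e., every map $\varphi:S\to [k]$ with $\varphi(v)\in L(v)$ for each $v\in S$ and $\varphi(u)\neq \varphi(v)$ for each edge $uv\in E(G[S])$. There are at most $k^{|S|}\leq k^{3(r-1)}$ such maps, which is a constant since $k$ and $r$ are fixed. For each $\varphi$, I would define updated lists $L_\varphi(v):=L(v)\setminus\{\varphi(u):u\in N(v)\cap S\}$ for $v\in V(G)\setminus S$; then $G$ admits an $L$-coloring extending $\varphi$ if and only if $G\setminus S$ admits an $L_\varphi$-coloring.

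Finally, for each branch, I would solve list-$k$-coloring on the disjoint union of cliques $G\setminus S$ with lists $L_\varphi$ component by component: for each clique component $C$, the existence of an $L_\varphi$-coloring of $C$ is equivalent to the existence of a matching saturating $V(C)$ in the bipartite graph on $V(C)\cup [k]$ with edges $\{vc:c\in L_\varphi(v)\}$, which can be decided in polynomial time (for instance, by the Hopcroft--Karp algorithm). Overall, the algorithm runs in time $k^{3(r-1)}\cdot \mathrm{poly}(n)=\mathrm{poly}(n)$. The only structural ingredient needed is the standard observation that the $P_3$-free graphs are exactly the disjoint unions of cliques; the main potential obstacle---keeping the branching blow-up polynomial---is resolved by the hypothesis that $k$ and $r$ are constants.
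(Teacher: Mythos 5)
The key step of your reduction fails: from the fact that $G$ is $rP_3$-free you cannot conclude that a maximal collection of pairwise vertex-disjoint induced copies of $P_3$ has at most $r-1$ members. An induced copy of $rP_3$ in $G$ requires the $r$ three-vertex paths to be not only vertex-disjoint but pairwise \emph{anticomplete}, i.e.\ with no edges between distinct copies. For instance, with $r=2$, take $m$ disjoint copies of $P_3$ and add every edge between vertices lying in different copies: each copy is still an induced $P_3$, and the resulting graph is $2P_3$-free (any six vertices meet at least two copies and hence induce a connected graph), yet it contains $m$ pairwise vertex-disjoint induced copies of $P_3$ with $m$ arbitrarily large. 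Consequently your set $S$ can have size linear in $n$, the enumeration of up to $k^{|S|}$ colorings of $S$ is exponential, and the algorithm is not polynomial. The remaining parts of your argument are sound --- maximality does give that $G\setminus S$ is $P_3$-free, hence a disjoint union of cliques, and list-coloring such a graph does reduce to bipartite matching --- but they rest on the false bound $|S|\leq 3(r-1)$.

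This gap is precisely where the difficulty of the theorem lies, and it is why the paper proceeds differently: it invokes a substantial reduction (Theorem 5.1 of Hajebi, Li and Spirkl, quoted here as Theorem~\ref{thm:nogoodp3}) which, in polynomial time, produces polynomially many subinstances $(G',L')$ in which no three vertices inducing a path share a common color in their lists. In such an instance, for each color $i$ the subgraph induced by the vertices whose lists contain $i$ is $P_3$-free, hence a disjoint union of cliques, and admissibility is then decided by a single bipartite matching between vertices and pairs consisting of a color and a clique component of that color (Theorem~\ref{thm:matchingmagic}). To repair your proposal you would need to replace the vertex-disjoint packing bound by an argument that genuinely uses $rP_3$-freeness, which essentially amounts to reproving that reduction.
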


As shown in Theorem~\ref{thm:main} below, Theorem~\ref{thm:main2} along with a number of results from the literature (collected in Theorem~\ref{thm:knownpoly}) yields a full dichotomy for the \textsc{List-$k$-Coloring Problem} on $H$-free graphs for all $k>4$.

\begin{theorem}\label{thm:knownpoly}
Let $k\geq 1$ be integers. Then the \textsc{List-$k$-Coloring Problem} restricted to $H$-free graphs can be solved in polynomial time if 
\begin{itemize}
    \item $H=P_5+rP_1$ for some $r\geq 1$; \textup{(Couturier, Golovach, Kratsch and Paulusma \cite{LkP5+rP1&L5P4+P2-NP}). \label{thm:p5rp1}}
		\end{itemize}
  and remains \textsf{NP}-hard if either
		\begin{itemize}
			\item $H=P_6$ and $k>3$ \textup{(Golovach, Paulusma and Song \cite{L4P6-NP})\label{thm:listp6}}; or
			\item $H=P_4+P_2$ and $k>4$ \textup{(Couturier, Golovach, Kratsch and Paulusma \cite{LkP5+rP1&L5P4+P2-NP}) \label{thm:p4p2}}.
		\end{itemize}
	\end{theorem}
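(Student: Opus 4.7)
Since this theorem collects three independent results from the literature, the plan is to verify each bullet by invoking the cited paper; below I sketch a natural strategy for each.

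For the polynomial-time claim on $P_5+rP_1$-free graphs, the high-level strategy is to reduce to the case of $P_5$-free graphs, which is already known to admit a polynomial-time algorithm for \textsc{List-$k$-Coloring}. If $G$ is $P_5$-free we are done. Otherwise, enumerate an induced $P=P_5$ in $G$ (at most $O(|V(G)|^5)$ choices) and consider $V(G)\setminus N[V(P)]$. Any independent set there has fewer than $r$ vertices, for otherwise together with $V(P)$ it would form an induced $P_5+rP_1$. By Ramsey's theorem, either this set has size bounded in terms of $r$ and $k$ (so we may branch exhaustively over its list colorings), or $G$ contains $K_{k+1}$ and is not $k$-colorable. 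After such branching, together with branching over list colorings of $V(P)$, and standard list updates to the remaining vertices, the residual instance is $P_5$-free and can be handled via the known subroutine.

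For the two \textsf{NP}-hardness results, the plan is to give polynomial-time reductions from a known hard problem (for instance, \textsc{$3$-Coloring} on an appropriate class, \textsc{Not-All-Equal-SAT}, or \textsc{List-$(k-1)$-Coloring}). The constructed graph must encode variables and constraints via gadgets whose interactions yield no induced copy of $H$. For $H=P_6$ (with $k>3$), the diameter must essentially be bounded by $4$, so the gadgets must be dense enough that every would-be induced $P_6$ is shortcut by a chord. For $H=P_4+P_2$ (with $k>4$), the additional constraint of forbidding an induced $P_4$ together with a disjoint induced edge typically forces the gadget into a single connected dominating ``core'' joined appropriately to all variable and clause apparatus. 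The main obstacle in both cases is the gadget design: encoding rich logical structure while globally avoiding even a single induced copy of $H$. These constructions are carried out in the cited papers, completing the proof.
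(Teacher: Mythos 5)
The paper states this theorem purely as a collection of results from the literature and gives no proof of its own, and your proposal likewise rests on the citations to Couturier--Golovach--Kratsch--Paulusma and Golovach--Paulusma--Song, so it matches the paper's treatment. (Your informal sketches are not load-bearing, and note that the $P_5+rP_1$ outline is not quite right as written: after branching on $V(P)$ and the bounded set outside $N[V(P)]$, the residual graph induced on the remaining neighbors of $P$ need not be $P_5$-free, so that reduction requires the more careful argument of the cited paper.)
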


Thus, our main result is the following: 

\begin{theorem}\label{thm:main}
   Let $k>4$ be an integer and let  $H$ be a graph. Then the \textsc{List-$k$-Coloring Problem} restricted to $H$-free graphs can be solved in polynomial time if for some integer $r\geq 1$,  either $rP_3$ or $P_5+rP_1$ contains $H$.
     Otherwise, the \textsc{List-$k$-Coloring Problem} on $H$-free graphs is \textsf{NP}-hard.
\end{theorem}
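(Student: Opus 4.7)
The plan is to derive Theorem~\ref{thm:main} by combining Theorem~\ref{thm:main2} with the hardness results collected in Theorem~\ref{thm:knownpoly} and with Theorem~\ref{thm:clawgirth}, using a short case analysis on the structure of $H$. Throughout the argument I use the standard monotonicity principle: if $H$ is an induced subgraph of $H'$, then every $H$-free graph is $H'$-free, so a polynomial-time algorithm on $H'$-free graphs immediately yields one on $H$-free graphs, and conversely \textsf{NP}-hardness on $H'$-free graphs lifts to the larger class of $H$-free graphs whenever $H'$ is an induced subgraph of $H$.

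For the positive direction, I apply this monotonicity directly. Suppose $H$ is contained in $rP_3$ for some $r \ge 1$; then the class of $H$-free graphs is a subclass of the $rP_3$-free graphs, and Theorem~\ref{thm:main2} supplies the polynomial-time algorithm. Similarly, if $H$ is contained in $P_5+rP_1$, the first item of Theorem~\ref{thm:knownpoly} applies.

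For the \textsf{NP}-hardness direction, I would assume that $H$ is not contained in any $rP_3$ and not contained in any $P_5+rP_1$, and exhibit an induced subgraph of $H$ that already belongs to a known hard case. The case analysis proceeds as follows. If $H$ has a component that is not a path, Theorem~\ref{thm:clawgirth} yields \textsf{NP}-hardness of the \textsc{3-Coloring Problem} on $H$-free graphs, and since \textsc{3-Coloring} embeds into \textsc{List-$k$-Coloring} (by taking all lists equal to $\{1,2,3\}$), this transfers to our setting. Otherwise $H$ is a linear forest. If some component of $H$ has at least six vertices, then $P_6 \subseteq H$ as an induced subgraph, and the $P_6$ item of Theorem~\ref{thm:knownpoly} applies. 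Thus every component of $H$ has at most five vertices. If $H$ has two non-trivial components, at least one of which has at least four vertices, then $P_4+P_2 \subseteq H$, and the $P_4+P_2$ item of Theorem~\ref{thm:knownpoly} concludes. The remaining scenarios are that all non-trivial components of $H$ have at most three vertices (so $H$ is contained in $rP_3$ for $r$ the number of components), or that $H$ has exactly one component on four or five vertices with all other components isolated vertices (so $H$ is contained in $P_5+rP_1$); both contradict our assumption.

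The bulk of the technical work is already packaged inside Theorem~\ref{thm:main2}, whose proof drives the rest of the paper; all the remaining \textsf{NP}-hardness reductions are available in the literature. Consequently, the main obstacle in deducing Theorem~\ref{thm:main} lies entirely upstream, and the argument above is essentially just a routine dichotomy on linear forests once Theorem~\ref{thm:main2} is in hand.
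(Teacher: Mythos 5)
Your proposal is correct and follows essentially the same route as the paper: the positive cases via Theorem~\ref{thm:main2} and the $P_5+rP_1$ result, and the hardness cases by reducing a linear forest $H$ (after invoking Theorem~\ref{thm:clawgirth}) to containing $P_6$ or $P_4+P_2$. Your extra remark that \textsc{3-Coloring} hardness transfers to \textsc{List-$k$-Coloring} by setting all lists to $\{1,2,3\}$ is a detail the paper leaves implicit, and your case split is just a more explicit phrasing of the paper's observation that $H$ must contain $P_4$ and hence $P_6$ or $P_4+P_2$.
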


\begin{proof}[Proof (assuming Theorem \ref{thm:main2}).]
	If $rP_3$ contains $H$ for some integer $r\geq 1$, then the result follows from Theorem~\ref{thm:main2}, and if $P_5+rP_1$ contains $H$ for some integer $r\geq 1$, then the result follows from the first bullet of Theorem \ref{thm:p5rp1}. So we may assume that neither holds. Our goal is then to show that the \textsc{List-$k$-Coloring Problem} on $H$-free graphs is \textsf{NP}-hard.
	
	By Theorem~\ref{thm:clawgirth}, we may assume that each component of $H$ is a path. Since  $rP_3$ does not contain $H$ for any $r\geq 1$, it follows that $H$ contains $P_4$. This, combined with the assumption that $P_5+rP_1$ does not contain $H$ for any $r\geq 1$, implies that $H$ contains either $P_6$ or $P_4+P_2$. But then the result follows from the second and the third bullet of Theorem \ref{thm:knownpoly}.
 \end{proof}

It remains to prove Theorem~\ref{thm:main2}, which we do in the next section.

\section{The algorithm}

We begin with providing some context. The proof of Theorem~\ref{thm:L5C} in \cite{L5CrP3} consists of two steps. The first one, which works for general $k$, reduces the problem in polynomial time to polynomially many instances in which no three vertices with a common color in their lists induce a path. The second step, confined to the case $k=5$, renders an intricate analysis within radius-two balls around vertices with list-size more than two, eventually reducing the problem to lists of size at most two (and so to \textsc{2SAT}). In our proof of Theorem~\ref{thm:main2}, the first step remains untouched, but the second step is superseded by Theorem~\ref{thm:matchingmagic} below, which has a significantly less technical proof, and holds true for all $k$.

For integers $k,r\geq 1$, by a \textit{$(k,r)$-instance} we mean a pair $(G,L)$ where $G$ is an $rP_3$-free graph and $L$ is a list-$k$-assignment of $G$. We say that   a $(k,r)$-instance $(G,L)$ is \textit{admissible} if $G$ admits an $L$-coloring.

\begin{theorem}\label{thm:matchingmagic}
    Let $k,r\geq 1$ be fixed integers. Let $(G,L)$ be a $(k,r)$-instance where $G$ has $n\geq 1$ vertices. Assume that for every $3$-subset $\{x,y,z\}$ of $V(G)$ inducing a path, we have $L(x)\cap L(y)\cap L(z)=\emptyset$ Then it can be decided in time $\mathcal{O}(n^{5/2})$ whether $(G,L)$ is admissible.
\end{theorem}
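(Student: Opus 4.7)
The plan is to reduce the problem to bipartite maximum matching and apply the Hopcroft--Karp algorithm; since $k$ is fixed, this will yield the claimed $\mathcal{O}(n^{5/2})$ bound.

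The main structural observation is that, for each color $c\in[k]$, setting $V_c=\{v\in V(G):c\in L(v)\}$, any induced $P_3$ in $G[V_c]$ would also be an induced $P_3$ of $G$ and would witness a color common to three lists, contradicting the hypothesis. Hence $G[V_c]$ is $P_3$-free, and therefore a disjoint union of cliques. (Interestingly, the $rP_3$-freeness of $G$ itself is not used for this lemma; only the hypothesis on three-vertex induced paths matters.) The immediate consequence is that, in any $L$-coloring and for every color $c$, each clique component of $G[V_c]$ contains at most one vertex assigned color $c$.

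I would then encode the problem as a bipartite matching instance. Let $B$ be the bipartite graph with parts $A=V(G)$ and $S=\{s_{c,C}:c\in[k],\ C\text{ a clique component of }G[V_c]\}$; for every $v\in V(G)$ and every $c\in L(v)$, let $C_{v,c}$ be the unique clique component of $G[V_c]$ containing $v$, and put the edge $v\,s_{c,C_{v,c}}$ in $B$. I claim $(G,L)$ is admissible if and only if $B$ has a matching saturating $A$. Given an $L$-coloring $\varphi$, the map $v\mapsto s_{\varphi(v),C_{v,\varphi(v)}}$ is such a matching: two distinct vertices $u,v$ mapped to the same slot $s_{c,C}$ would lie in the clique $C$ of $G[V_c]$, hence be adjacent in $G$, while both being assigned color $c$. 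Conversely, a saturating matching yields an $L$-coloring by reading off colors, and the same argument shows properness.

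For the running time, $|A|=n$, $|S|\le kn$ and $|E(B)|\le kn$ are all $\mathcal{O}(n)$, so Hopcroft--Karp finds a maximum matching of $B$ in time $\mathcal{O}(\sqrt{|V(B)|}\cdot|E(B)|)=\mathcal{O}(n^{3/2})$. Setting up $B$ requires computing the clique components of each $G[V_c]$, costing $\mathcal{O}(n^2)$ in the worst case and dominating the total runtime, comfortably within the $\mathcal{O}(n^{5/2})$ target. The main subtlety is to verify the reduction in both directions: one must check that the ``one vertex per slot'' matching constraint captures the properness of an $L$-coloring exactly, which comes down precisely to the fact that two adjacent vertices of $V_c$ necessarily lie in a common clique component of $G[V_c]$.
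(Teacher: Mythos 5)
Your proposal is correct and follows essentially the same route as the paper: both reduce to bipartite matching by observing that each color class $G[V_c]$ is $P_3$-free (hence a disjoint union of cliques), building the vertex--(color, clique-component) incidence graph, showing admissibility is equivalent to a matching saturating $V(G)$, and invoking Hopcroft--Karp. Your runtime accounting is even slightly sharper (using the edge-count form of Hopcroft--Karp), but the argument is the same as the paper's.
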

\begin{proof}
    For every $i\in [k]$, let $G_i=G[\{v\in V(G): i\in L(v)\}]$. By our assumption, we have that $G_i$ is $P_3$-free, and so it follows that every component of $G_i$ is a clique of $G$. Let $\mathcal{C}_i$ be the set of all components of $G_i$. 
    
  We construct a bipartite graph $\Gamma$ with bipartition $(A,B)$ where the vertices in $A$ and $B$ are labelled as
    $$A=\{a_v:v\in V(G)\};$$  
    $$B=\{b^i_C:i\in [k], C\in \mathcal{C}_i\}$$
    such that 
    $$E(\Gamma)=\bigcup_{i=1}^k\bigcup_{C\in \mathcal{C}_i}\{a_vb^i_{C}: v\in C\}.$$
    See Figure~\ref{fig:Gamma}. It follows that $|V(\Gamma)|=n+ |\mathcal{C}_1|+\cdots+|\mathcal{C}_k|\leq (k+1)n$. Moreover, we have:

    \begin{figure}
        \centering
        \includegraphics[scale=0.9]{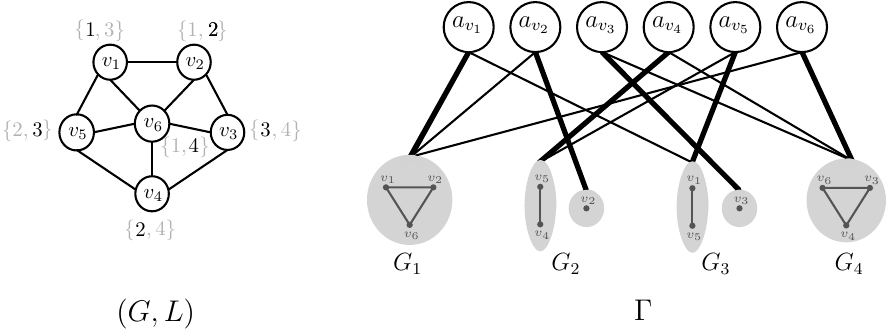}
        \caption{A $(4,2)$-instance $(G,L)$ (left) and the graph $\Gamma$ (right).}
        \label{fig:Gamma}
    \end{figure}

    \sta{\label{st:matchingmagic}$(G,L)$ is admissible if and only if $\Gamma$ has a matching which covers all vertices in $A$.}

    To see the ``only if'' implication, assume that $G$ admits an $L$-coloring $\varphi$. Then, for every vertex $v\in V(G)$, we have $v\in V(G_{\varphi(v)})$, and so there exists a unique component $C_v\in \mathcal{C}_{\varphi(v)}$ such that $v\in C_v$. This, along with the definition of $\Gamma$, implies that for every $v\in V(G)$, we have $a_vb^{\varphi(v)}_{C_v}\in E(\Gamma)$. Let $M=\{a_vb^{\varphi(v)}_{C_v}:v\in V(G)\}$; then we have $|M|=|A|$. We claim that $M$ is a matching in $\Gamma$. Clearly, no two edge in $M$ share an end in $A$. Also no two edges in $M$ share an end in $B$; for otherwise there are distinct vertices $u,v\in V(G)$ as well as $i\in [k]$ and $C\in \mathcal{C}_i$ such that $\varphi(u)=\varphi(v)=i$ and $C_u=C_v=C$. But this violates the fact that $\varphi$ is a proper coloring and $C$ is a clique of $G$. The claim follows, and so does the ``only if'' implication of \eqref{st:matchingmagic}.
    
    For the ``if'' implication, assume that there exists a matching $M\subseteq E(\Gamma)$ in $\Gamma$ which covers $A$. From the definition of $\Gamma$, it follows that there exists a map $\varphi:V(G)\rightarrow [k]$, as well as a component $C_v\in \mathcal{C}_{\varphi(v)}$ for each $v\in V(G)$, such that $M=\{a_vb^{\varphi(v)}_{C_v}:v\in V(G)\}$. We claim that $\varphi$ is an $L$-coloring of $G$.  Assume that $u,v\in V(G)$ are distinct and there exists $i\in [k]$ such that $\varphi(u)=\varphi(v)=i$. Then we have $C_u,C_v\in \mathcal{C}_i$. Also, since $M$ is a matching in $\Gamma$, it follows that $b^i_{C_u}$ and $b^i_{C_v}$ are distinct, which in turn implies that $C_u$ and $C_v$ are distinct components of $G_i$. This, combined with the fact that $G_i$ is an induced subgraph of $G$, implies that $u\in C_u$ and $v\in C_v$ are not adjacent in $G$. Thus, $\varphi$ is a proper $k$-coloring of $G$. Moreover, for every vertex $v\in V(G)$, since $a_vb^{\varphi(v)}_{C_v}\in M\subseteq E(\Gamma)$, it follows from the definition of $\Gamma$ that $v\in C_v\in \mathcal{C}_{\varphi(v)}$, and so $v\in V(G_{\varphi(v)})$. Therefore, we $\varphi(v)\in L(v)$ for every $v\in V(G)$. This proves \eqref{st:matchingmagic}.

    \medskip

    By \eqref{st:matchingmagic}, Theorem~\ref{thm:matchingmagic} is immediate from a well-known result of Hopcroft and Karp \cite{H&K} that the cardinality of the maximum matching in an $n$-vertex bipartite graph can be computed in time $\mathcal{O}(n^{5/2})$.
\end{proof}

Let us turn to the ``first step'' as discussed at the beginning of this section. For integers $k,r\geq 1$ and a $(k,r)$-instance $(G,L)$, by a \textit{$(G,L)$-profile} we mean a set $\mathcal{I}$ of pairs $(G',L')$ where $G'$ is an induced subgraph of $G$ and $L'$ is a list-$k$-assignment for $G'$ such that $L'(v)\subseteq L(v)$ for all $v\in V(G')$. In particular, if $\mathcal{I}$ is $(G,L)$-profile for a $(k,r)$-instance $(G,L)$, then every pair $(G',L')\in \mathcal{I}$ is a $(k,r)$-instance, as well.

\begin{theorem}[Hajebi, Li and Spirkl, see Theorem 5.1 in \cite{L5CrP3}]\label{thm:nogoodp3}
	Let $k,r\geq 1$ be fixed integers. Then there exists an integer $p=p(k,r)\geq 1$ such that for every $(k,r)$-instance $(G,L)$ with $|V(G)|=n\geq 1$, there is a $(G,L)$-profile $\mathcal{I}$ with the following specifications.
	\begin{itemize}
		\item $|\mathcal{I}|\leq \mathcal{O}\left(n^{p}\right)$ and $\mathcal{I}$ can be computed from $(G,L)$ in time $ \mathcal{O}\left(n^{p}\right)$.
		\item For every $(G',L')\in \mathcal{I}$ and every $3$-subset $\{x,y,z\}$ of $V(G')$ inducing a path, we have $L'(x)\cap L'(y)\cap L'(z)=\emptyset$.
		\item $(G,L)$ is admissible if and only if some $(G',L')\in \mathcal{I}$ is admissible. 
	\end{itemize}
\end{theorem}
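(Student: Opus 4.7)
The plan is to construct the profile by exploiting a clean structural decomposition of $rP_3$-free graphs: after removing the vertices of a maximal collection of pairwise vertex-disjoint induced $P_3$'s, the remaining graph is a disjoint union of cliques, and only boundedly many vertices are removed.

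First I would greedily compute a maximal collection $\mathcal{P}=\{P^1,\ldots,P^s\}$ of pairwise vertex-disjoint induced $P_3$'s in $G$. Since $G$ is $rP_3$-free, $s\leq r-1$, so the set $V_1=\bigcup_{i=1}^s V(P^i)$ has size at most $3(r-1)$. By maximality of $\mathcal{P}$, no induced $P_3$ of $G$ lies entirely inside $V_0:=V(G)\setminus V_1$, meaning that $G[V_0]$ is $P_3$-free and hence a disjoint union of cliques of $G$. Such a collection $\mathcal{P}$ can be computed in time $\mathcal{O}(n^3)$ by repeatedly searching for an induced $P_3$ disjoint from the collection produced so far, with at most $r-1$ successful iterations before the search fails.

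Next I would enumerate every proper $L$-coloring $\psi:V_1\rightarrow [k]$ of $G[V_1]$, of which there are at most $k^{3(r-1)}=\mathcal{O}(1)$ for fixed $k$ and $r$. For each such $\psi$, I define a list-$k$-assignment $L_\psi$ on $V_0$ by
\[
L_\psi(v)=L(v)\setminus\{\psi(u):u\in V_1,\ uv\in E(G)\},
\]
and include the pair $(G[V_0],L_\psi)$ in $\mathcal{I}$. This yields $|\mathcal{I}|=\mathcal{O}(1)$, and $\mathcal{I}$ is constructed in total time $\mathcal{O}(n^3)$, so taking $p(k,r)=3$ suffices.

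The three stated properties can then be verified routinely. The size and runtime bounds are immediate. The triple condition is vacuous, since $G[V_0]$ is a disjoint union of cliques and therefore no $3$-subset of $V_0$ induces a path. For the admissibility equivalence, the restriction of any $L$-coloring $\varphi$ of $G$ to $V_1$ is one of the enumerated maps $\psi$, and then $\varphi|_{V_0}$ is an $L_\psi$-coloring of $G[V_0]$; conversely, any $L_\psi$-coloring of $G[V_0]$ extends via $\psi$ to a proper $L$-coloring of $G$, because $L_\psi$ was designed precisely to forbid color conflicts across the $V_0$-$V_1$ boundary. The main (and only) nontrivial observation driving the argument is the structural fact that a maximal vertex-disjoint induced $P_3$-packing in an $rP_3$-free graph has at most $r-1$ members and leaves its complement $P_3$-free; once that is in hand, everything else is bookkeeping.
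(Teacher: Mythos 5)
Your key structural claim is false: $rP_3$-freeness does not bound the size of a maximal collection of pairwise \emph{vertex-disjoint} induced $P_3$'s. The graph $rP_3$ occurs as an \emph{induced} subgraph only when the $r$ copies are pairwise anticomplete (no edges between distinct copies), not merely vertex-disjoint. Concretely, let $G_m$ be obtained from $m$ disjoint copies of $P_3$ by adding every edge between distinct copies; then $G_m$ is $2P_3$-free (any two disjoint $3$-sets of vertices have an edge between them unless both lie inside a single copy, which is impossible since each copy has only three vertices), yet $G_m$ contains $m$ pairwise vertex-disjoint induced $P_3$'s. So with your definition of $\mathcal{P}$ the bound $s\leq r-1$ fails, $V_1$ can have size linear in $n$, and the enumeration of the colorings $\psi$ of $V_1$ is exponential, destroying both $|\mathcal{I}|=\mathcal{O}(1)$ and the claimed running time.

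If instead you take $\mathcal{P}$ to be a maximal collection of induced $P_3$'s that are pairwise \emph{anticomplete} (i.e., a maximal induced $sP_3$), then $s\leq r-1$ does hold, but maximality now only says that no induced $P_3$ of $G[V_0]$ is anticomplete to $V_1$; it does not make $G[V_0]$ $P_3$-free, so your vacuous verification of the second bullet collapses, and even after removing from the lists the colors $\psi$ assigns to $V_1$-neighbors, a triple in $V_0$ inducing a path may retain a common color. Either way there is a genuine gap, and it is the central difficulty the real argument has to address: note that the theorem only promises polynomially many instances, with an exponent $p$ depending on $k$ and $r$, and only the weaker condition that lists of path-inducing triples have empty intersection (not that $G'$ is $P_3$-free). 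The proof of Theorem 5.1 in \cite{L5CrP3} is accordingly much more involved than a constant-size deletion argument; had your reduction worked, it would yield $|\mathcal{I}|=\mathcal{O}(1)$ instances that are disjoint unions of cliques, which is far stronger than what is known and should itself have been a warning sign.
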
 

Finally, we merge Theorems~\ref{thm:matchingmagic} and \ref{thm:nogoodp3} to deduce Theorem~\ref{thm:main2}, restated as follows:

\begin{theorem}\label{thm:mainrP3}
    For all fixed integers $k,r\geq 1$, there exists an algorithm which, given a $(k,r)$-instance $(G,L)$, decides in polynomial time whether $(G,L)$ is admissible.
    \end{theorem}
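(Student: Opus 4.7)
The plan is to combine Theorems~\ref{thm:matchingmagic} and \ref{thm:nogoodp3} in an essentially mechanical way. Given a $(k,r)$-instance $(G,L)$ with $n$ vertices, the first step is to invoke Theorem~\ref{thm:nogoodp3} to compute, in time $\mathcal{O}(n^{p})$ for some $p=p(k,r)$, a $(G,L)$-profile $\mathcal{I}$ of cardinality at most $\mathcal{O}(n^{p})$ such that (i) in each $(G',L')\in\mathcal{I}$, no $3$-vertex induced path has a color common to all three lists, and (ii) $(G,L)$ is admissible if and only if at least one $(G',L')\in\mathcal{I}$ is admissible.

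The next step is to process each element of $\mathcal{I}$ independently. For any $(G',L')\in\mathcal{I}$, the graph $G'$ is an induced subgraph of $G$ and hence $rP_3$-free, and $L'$ is a list-$k$-assignment of $G'$, so $(G',L')$ is again a $(k,r)$-instance. Crucially, property (i) from Theorem~\ref{thm:nogoodp3} is precisely the hypothesis required to apply Theorem~\ref{thm:matchingmagic} to $(G',L')$. I would therefore use Theorem~\ref{thm:matchingmagic} to decide admissibility of $(G',L')$ in time $\mathcal{O}(n^{5/2})$, and declare $(G,L)$ admissible if and only if at least one such call returns ``yes'', which is justified by property (ii).

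The total running time is $\mathcal{O}(n^{p})+\mathcal{O}(n^{p})\cdot\mathcal{O}(n^{5/2})=\mathcal{O}(n^{p+5/2})$, which is polynomial since $k$ and $r$ (and hence $p$) are fixed. There is no real obstacle at this stage: all of the actual work has been carried out in Theorems~\ref{thm:matchingmagic} and \ref{thm:nogoodp3}, and the proof of Theorem~\ref{thm:mainrP3} amounts to little more than verifying that the output guaranteed by the former feeds correctly into the hypothesis of the latter, and checking that the two polynomial running times compose into a single polynomial bound.
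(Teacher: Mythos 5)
Your proposal is correct and is essentially identical to the paper's own proof: compute the profile from Theorem~\ref{thm:nogoodp3}, decide each $(G',L')\in\mathcal{I}$ via Theorem~\ref{thm:matchingmagic}, and answer according to the third bullet of Theorem~\ref{thm:nogoodp3}, with the same polynomial running-time composition. (Only a trivial slip: in your last sentence ``former'' and ``latter'' are swapped, since it is the output of Theorem~\ref{thm:nogoodp3} that feeds the hypothesis of Theorem~\ref{thm:matchingmagic}.)
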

\begin{proof}
    The algorithm is as follows. Given a $(k,r)$-instance $(G,L)$:
    \begin{enumerate}[\hspace{3mm} 1.]
        \item Compute the $(G,L)$-profile $\mathcal{I}$ as in Theorem~\ref{thm:nogoodp3}.
        \item For each $(k,r)$-instance $(G',L')\in \mathcal{I}$, decide whether $(G',L')$ is admissible.
        \item If there exists a $(k,r)$-instance $(G',L')\in \mathcal{I}$ which is admissible, then return ``$(G,L)$ is admissible.'' Otherwise, return ``$(G,L)$ is not admissible.''
    \end{enumerate}
    The correctness of the above algorithm is immediate from the third bullet of Theorem~\ref{thm:nogoodp3}. Also, the first two bullets of Theorem~\ref{thm:nogoodp3} combined with Theorem~\ref{thm:matchingmagic} imply that the above algorithm runs in polynomial time. This completes the proof of Theorem~\ref{thm:mainrP3}.
\end{proof}

	\bibliographystyle{abbrv}
	\bibliography{rP3ref}  
\end{document}